\newtheorem{theorem}{Theorem}[section]
\newtheorem{note}[theorem]{Note}
\newtheorem{prop}[theorem]{Proposition}
\newtheorem{cor}[theorem]{Corollary}
\newtheorem{exa}[theorem]{Example}
\newtheorem*{Theorem1'}{Theorem 1'}
\theoremstyle{definition}
\theoremstyle{remark}
\numberwithin{equation}{section}
\newcommand \g{{\mathfrak g}}
\newcommand \h{{\mathfrak h}}
\newcommand \hn{{\mathfrak h}(n)}
\newcommand \gl{{\mathfrak {gl}}}
\renewcommand \sl{{\mathfrak {sl}}}
\newcommand \R{{\mathbb R}}
\newcommand \Co{{\mathbb C}}
\newcommand \GL{{\mathrm {GL}}}
\newcommand \al{{\alpha}}
\newcommand \be{{\beta}}
\newcommand \ga{{\gamma}}
\newcommand \dl{{\delta}}
\newcommand \im{{\mathrm {im}}}
\begin{document}

\title [modular representations of Heisenberg algebras]{modular representations of Heisenberg algebras}

\author{Fernando Szechtman}
\address{Department of Mathematics and Statistics, Univeristy of Regina, Canada}
\email{fernando.szechtman@gmail.com}
\thanks{The author was supported in part by an NSERC discovery grant}

\subjclass[2000]{Primary 17B10}



\keywords{Heisenberg algebra; irreducible representation}

\begin{abstract} Let $F$ be be an arbitrary field and
let $\h(n)$ be the Heisenberg algebra of dimension $2n+1$ over
$F$. It was shown by Burde that if $F$ has characteristic 0 then
the minimum dimension of a faithful $\hn$-module is $n+2$. We show
here that his result remains valid in prime characteristic $p$, as
long as $(p,n)\neq (2,1)$.

We construct, as well, various families of faithful irreducible
$\hn$-modules if $F$ has prime characteristic, and classify these
when $F$ is algebraically closed. Applications to matrix theory
are given.
\end{abstract}

\maketitle

\section{Introduction}

Let $F$ be an arbitrary field. For $n\geq 1$, let $\h(n,F)$, or
just $\hn$, stand for the Heisenberg algebra of dimension $2n+1$
over $F$. This is a 2-step nilpotent Lie algebra with
1-dimensional center. It was shown by Burde \cite{B} that when $F$
has characteristic~0 the minimum dimension of a faithful
$\hn$-module is $n+2$. Further results on low dimensional
imbeddings of nilpotent Lie algebras when $\mathrm{char}(F)=0$ can
be found in \cite{B}, \cite{CR}, \cite{BM}.

Here we extend Burde's result to arbitrary fields by showing that
the minimum dimension of a faithful $\hn$-module is always $n+2$,
except only when~$n=1$ and $\mathrm{char}(F)=2$. See \S\ref{sec3}
for details. Our main tool if $F$ has prime characteristic is the
classification of faithful irreducible $\hn$-modules when $F$ is
algebraically closed. As we were unable to find a proof of this in
the literature, one is included in \S\ref{sec2}. We construct, as
well, various families of faithful irreducible $\hn$-modules when
$F$ is an arbitrary field of prime characteristic (see
\S\ref{sec4} and \S\ref{sec5}) and furnish applications to matrix
theory, found in \S\ref{sec2}.

We fix throughout a symplectic basis
$x_1,\dots,x_n,y_1,\dots,y_n,z$ of $\hn$, i.e., one with
multiplication table $[x_i,y_i]=z$. Clearly, a
representation~$R:\hn\to\gl(V)$ is faithful if and only if
$R(z)\neq 0$. All representations will be finite dimensional,
unless otherwise mentioned. If $R:\g\to\gl(V)$ and $T:\g\to\gl(V)$
are representations of a Lie algebra $\g$, we refer to $T$ and $R$
as equivalent if there is $\Omega\in\mathrm{Aut}(\g)$ such that
$T$ is similar to $R\circ\Omega$.

\section{Faithful irreducible representations of $\hn$}\label{sec2}

\begin{prop}\label{modhn} Let $F[X_1,\dots,X_n]$ be the polynomial algebra
in $n$ commuting variables $X_1,\dots,X_n$ over $F$. For $q\in
F[X_1,\dots,X_n]$, let $m_q$ be the linear endomorphism
``multiplication by $q$" of $F[X_1,\dots,X_n]$. Let
$\al,\be_1,\dots,\be_n,\ga_1,\dots,\ga_n\in F$, where $\al\neq 0$.
Then

(1) $F[X_1,\dots,X_n]$ is a faithful $\mathfrak{h}(n)$-module via
$$
z\mapsto \al\cdot I, x_i\mapsto \beta_i\cdot
I+\alpha\cdot\partial/\partial X_i,\, y_i\mapsto \ga_i\cdot
I+m_{X_i}.
$$

(2) $F[X_1,\dots,X_n]$ is irreducible if and only if $F$ has
characteristic 0.

(3) Suppose $F$ has prime characteristic $p$. Then
$(X_1^p,\dots,X_n^p)$ is an $\mathfrak{h}(n)$-invariant subspace
of $F[X_1,\dots,X_n]$ and
$$V_{\al,\be_1,\dots,\be_n,\ga_1,\dots,\ga_n}=F[X_1,\dots,X_n]/(X_1^p,\dots,X_n^p)$$
is a faithful irreducible $\mathfrak{h}(n)$-module of dimension
$p^n$. Moreover, $V_{\al,\be_1,\dots,\be_n,\ga_1,\dots,\ga_n}$ is
isomorphic to $V_{\al',\be'_1,\dots,\be'_n,\ga'_1,\dots,\ga'_n}$
if and only if $\al=\al'$ and $\be_i=\be'_i,\ga_i=\ga'_i$ for
all~$1\leq i\leq n$. Furthermore,
$V_{\al,\be_1,\dots,\be_n,\ga_1,\dots,\ga_n}$ is equivalent to
$V_{1,0,\dots,0}$.
\end{prop}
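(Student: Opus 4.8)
Denote by $R=R_{\alpha,\beta,\gamma}$ the representation displayed in part (1). The plan is to handle the assertions of Proposition~\ref{modhn} in turn, the routine verifications first. For (1) I would check the defining relations directly: since scalar operators are central in $\gl(F[X_1,\dots,X_n])$, the only bracket needing attention is $[R(x_i),R(y_i)]=\alpha[\partial/\partial X_i,m_{X_i}]=\alpha I=R(z)$, which is the product rule, while $[R(x_i),R(y_j)]=0$ for $i\neq j$, together with $[R(x_i),R(x_j)]=0$ and $[R(y_i),R(y_j)]=0$, follow from commuting derivations and commuting multiplications; faithfulness is immediate from $R(z)=\alpha I\neq 0$. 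I would then record the reduction used throughout: any submodule $W$, being a subspace stable under each $R(x_i)$ and $R(y_i)$, is stable under $\partial/\partial X_i=\alpha^{-1}(R(x_i)-\beta_i I)$ and under $m_{X_i}=R(y_i)-\gamma_i I$.

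The heart of (2) and of the irreducibility in (3) is a single differentiation argument. Given a nonzero submodule $W$ and a nonzero $f\in W$, I would pick a monomial $X^a$ of maximal total degree occurring in $f$ and apply $D=\prod_i \partial^{a_i}/\partial X_i^{a_i}$; a monomial $X^b$ survives $D$ only if $b\geq a$ componentwise, which by maximality forces $b=a$, so $Df=(\prod_i a_i!)\,c_a$, where $c_a\neq 0$ is the coefficient of $X^a$ in $f$. In characteristic $0$ this constant is nonzero, whence $1\in W$ and, applying the $m_{X_i}$, $W=F[X_1,\dots,X_n]$, giving irreducibility. In characteristic $p$ the ideal $(X_1^p,\dots,X_n^p)$ is stable under every $m_{X_i}$ and, as $\partial/\partial X_i$ is a derivation killing each $X_j^p$, under every $\partial/\partial X_i$; being proper and nonzero it witnesses reducibility. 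For the quotient $V$, the classes of the monomials $X^a$ with $0\leq a_i\leq p-1$ form a basis, so $\dim V=p^n$ and $V$ is faithful; and the same differentiation argument applies because for $0\leq a_i\leq p-1$ each factorial $a_i!$ is a product of integers less than $p$, hence $\prod_i a_i!\neq 0$ in $F$.

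For the isomorphism criterion the nontrivial direction rests on the nilpotency of the non-scalar parts on $V$: one has $m_{X_i}^p=m_{X_i^p}=0$ and $(\partial/\partial X_i)^p=0$ in the quotient, so $R(x_i)-\beta_i I$ and $R(y_i)-\gamma_i I$ are nilpotent and $R(x_i)$, $R(y_i)$ have characteristic polynomials $(t-\beta_i)^{p^n}$ and $(t-\gamma_i)^{p^n}$. An $\h(n)$-isomorphism $V_{\alpha,\beta,\gamma}\to V_{\alpha',\beta',\gamma'}$ intertwines these operators, so equality of characteristic polynomials forces $\beta_i=\beta'_i$ and $\gamma_i=\gamma'_i$, while intertwining the central scalars $\alpha I$ and $\alpha' I$ forces $\alpha=\alpha'$; the converse is trivial.

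The remaining equivalence to $V_{1,0,\dots,0}$ I would establish by exhibiting an explicit automorphism rather than appealing to the structure of $\mathrm{Aut}(\h(n))$. The candidate is $\Omega(z)=\alpha z$, $\Omega(x_i)=\alpha x_i+\beta_i z$, $\Omega(y_i)=y_i+\gamma_i z$; the computation $[\Omega x_i,\Omega y_i]=\alpha[x_i,y_i]=\alpha z=\Omega(z)$ together with the vanishing of all other brackets shows $\Omega$ preserves the bracket, and its matrix in the basis $x_1,\dots,x_n,y_1,\dots,y_n,z$ is triangular with determinant $\alpha^{n+1}\neq 0$, so $\Omega\in\mathrm{Aut}(\h(n))$. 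Composing yields $R_{1,0,\dots,0}(\Omega z)=\alpha I$, $R_{1,0,\dots,0}(\Omega x_i)=\alpha\,\partial/\partial X_i+\beta_i I$ and $R_{1,0,\dots,0}(\Omega y_i)=m_{X_i}+\gamma_i I$, which is exactly $R_{\alpha,\beta,\gamma}$, so $V_{\alpha,\beta,\gamma}$ is equivalent to $V_{1,0,\dots,0}$ with the similarity taken to be the identity. I expect the main obstacle to be locating this $\Omega$: it must absorb the central scalar $\alpha$ and the two independent families of translations $\beta_i,\gamma_i$ simultaneously, and only the displayed choice accomplishes all three at once.
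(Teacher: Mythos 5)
Your proposal is correct and complete. The paper itself gives no written proof of Proposition~\ref{modhn} (it is dismissed as ``straightforward''), and your argument --- the product-rule verification of the relations, the differentiation trick extracting the nonzero scalar $\bigl(\prod_i a_i!\bigr)c_a$ (valid in the quotient since each $a_i\le p-1$), the characteristic-polynomial invariants $(t-\beta_i)^{p^n}$ and $(t-\gamma_i)^{p^n}$ yielding the isomorphism criterion, and the explicit automorphism $\Omega$ realizing the equivalence with $V_{1,0,\dots,0}$ --- is precisely the routine verification the paper intends, consistent with the tools it deploys later (e.g., minimal polynomials in Corollary~\ref{co1} and the symplectic basis change in Step~7 of Theorem~\ref{rephn}).
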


\begin{proof} This is straightforward.
\end{proof}

\begin{theorem}\label{rephn} Suppose $F$ has prime characteristic
$p$. Let $R:\h(n)\to\gl(V)$ be a faithful irreducible
representation. Assume each $z,x_1,\dots,x_n,y_1,\dots y_n$ acts
on~$V$ with at least one eigenvalue in $F$, say
$\al,\be_1,\dots,\be_n,\ga_1,\dots,\ga_n\in F$, respectively (this
is automatic if $F$ is algebraically closed). Then $V$ is
isomorphic to $V_{\al,\be_1,\dots,\be_n,\ga_1,\dots,\ga_n}$.
\end{theorem}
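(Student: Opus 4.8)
The plan is to recover the structure of $V$ from the eigenvalue data and match it against the explicit module $V_{\al,\dots}$ from Proposition \ref{modhn}. Since $z$ is central and acts irreducibly with eigenvalue $\al$, Schur-type reasoning over $F$ forces $R(z)=\al\cdot I$, and faithfulness gives $\al\neq 0$. Now I would normalize the generators: since $z=[x_i,y_i]$ and $R(z)=\al I$, the operators $A_i=R(x_i)$ and $B_i=R(y_i)$ satisfy the canonical commutation relations $[A_i,B_i]=\al I$, while all other brackets vanish. The first real step is to exploit prime characteristic: because $[A_i,B_j]$ is a scalar, $A_i^p$ and $B_j^p$ are central in the associative algebra generated by all the $A$'s and $B$'s (this is the standard fact that in characteristic $p$ the $p$th power map on a restricted Lie algebra lands in the centralizer; concretely $[A_i^p,B_i]=p\,\al^{\,?}\cdots=0$). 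Hence $A_i^p$ and $B_i^p$ act as scalars by irreducibility and Schur's lemma over $F$.

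Next I would use the assumed eigenvalues to pin down these scalars and build weight vectors. The key step is to locate a vector on which all the $B_i-\ga_i I$ act nilpotently, or dually, a highest-weight type vector for the $A_i$. Concretely, set $a_i=A_i-\be_i I$ and $b_i=B_i-\ga_i I$; these still satisfy $[a_i,b_i]=\al I$ and commute otherwise. Since $b_i$ has $\ga_i$ as an eigenvalue of $B_i$, the operator $b_i$ has a nonzero kernel on $V$; using that the $b_i$ commute with each other, I would find a common eigenvector, i.e. a nonzero $v_0$ with $b_1 v_0=\dots=b_n v_0=0$. The span of the monomials $a_1^{k_1}\cdots a_n^{k_n}v_0$ with $0\le k_i\le p-1$ is then $R(\hn)$-invariant: applying $b_i$ lowers the exponent $k_i$ by one (via the commutation relation, producing a factor $\al k_i$), applying $a_i$ raises it, and $a_i^p$ acts as the central scalar which one checks is $0$ on $v_0$. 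By irreducibility this span is all of $V$, and the map sending $X_i\mapsto a_i/\al$ (or a suitable rescaling) identifies $V$ with $V_{\al,\be_1,\dots,\ga_n}$.

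The main obstacle I anticipate is the existence of the simultaneous eigenvector $v_0$ and the verification that the resulting spanning set is genuinely a basis of dimension $p^n$ rather than something smaller. For the eigenvector, commuting operators over a non-algebraically-closed $F$ need not be simultaneously triangularizable, so I must argue carefully: each $b_i$ has eigenvalue $0$ in $F$ by hypothesis, hence $\ker b_i\neq 0$, and since the $b_j$ commute they preserve $\ker b_i$, allowing an inductive descent to a common kernel vector. For linear independence, I would show the monomials $a^{k}v_0$ are independent by applying the lowering operators $b_i$ and reading off leading terms, exactly as one proves the standard basis of a Fock-type module; the relation $[a_i,b_i]=\al I$ with $\al\neq 0$ makes this bookkeeping clean. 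Once dimension $p^n$ and the explicit action are established, the isomorphism with $V_{\al,\be_1,\dots,\be_n,\ga_1,\dots,\ga_n}$ is immediate from the formulas in Proposition \ref{modhn}.
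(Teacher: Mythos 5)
Your outline has the right overall shape (normalize by the eigenvalue data, find a common kernel vector $v_0$ for the $b_i=R(y_i)-\ga_i I$, generate a Fock-type basis with the $a_i=R(x_i)-\be_i I$, and match against Proposition \ref{modhn}), but as written it has a genuine gap at exactly the step on which the theorem turns: the existence of $v_0$. Commuting operators over a non-algebraically-closed field, each with nonzero kernel, need \emph{not} have a common kernel vector: $b_1=\mathrm{diag}(0,1)$ and $b_2=\mathrm{diag}(1,0)$ commute and each kill a line, yet $\ker b_1\cap\ker b_2=0$. Your ``inductive descent'' only uses that $b_j$ preserves $\ker b_i$; it does not show that $b_j$ restricted to $\ker b_i$ again has nonzero kernel, and the hypothesis that $\ga_j$ is an eigenvalue of $R(y_j)$ on all of $V$ gives no eigenvector inside the subspace $\ker b_i$. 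What is needed is that each $b_i$ is \emph{nilpotent}, so that its restriction to any invariant subspace still has a kernel. This is precisely what the paper establishes first (Step 3 of its proof: the generalized eigenspace $V(\be)$ is $\hn$-invariant by the commutation identity, hence equals $V$ by irreducibility, so $\be_k$ resp.\ $\ga_k$ is the \emph{only} eigenvalue and $a_k$, $b_k$ are nilpotent), and it is the ingredient missing from your write-up. A second, related defect: ``Schur's lemma over $F$'' does not make $A_i^p$, $B_i^p$ scalars when $F$ is not algebraically closed; Schur only gives that $\mathrm{End}_{\hn}(V)$ is a division algebra.

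Both defects can be repaired at once by completing your own $p$-th power remark. In characteristic $p$ one has $\mathrm{ad}(a_i^p)=(\mathrm{ad}\,a_i)^p$, and all brackets with $a_i$ are scalars, so $a_i^p=(A_i-\be_i I)^p$ and $b_i^p=(B_i-\ga_i I)^p$ are central in the associative algebra generated by the image of $R$, hence lie in the division algebra $\mathrm{End}_{\hn}(V)$; since $a_i^p$ kills the assumed $\be_i$-eigenvector it is non-invertible, hence zero, and likewise $b_i^p=0$. With $b_i^p=0$ your descent to $v_0$ becomes valid (restrictions of nilpotent operators are nilpotent), $a_i^p=0$ yields the correct minimal polynomials $(X-\be_i)^p$, and your leading-term bookkeeping for the independence of the $a_1^{k_1}\cdots a_n^{k_n}v_0$, $0\le k_i\le p-1$, then goes through; note that this would actually replace the paper's induction on $n$ with its direct-sum argument (Step 6) by a more uniform computation. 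So the route is salvageable, and once patched it is a legitimate variant of the paper's proof, but in the version you submitted the two quoted steps are asserted rather than proved, and the first is false in the stated generality.
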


\begin{proof} We divide the proof into various steps.

\smallskip

\noindent{\sc Step 1. }{\it $R(z)=\al\cdot I$, where $\al\neq 0$.}

\smallskip

By assumption $R(z)$ has an eigenvalue $\al\in F$. Let $U$ be the
$\al$-eigenspace of $R(z)$. Since $z\in Z(\hn)$, we see that $U$
is a non-zero $\hn$-invariant subspace of~$V$. But $V$ is
irreducible, so $U=V$, i.e., $R(z)=\al\cdot I$. Since $R$ is
faithful, $\al\neq 0$.

\smallskip

\noindent{\sc Step 2. }{\it For every $v\in V$ and every
$x,y\in\hn$ such that $[x,y]=z$, we have
$$
x^m yv=yx^m v+m\al x^{m-1}v,\quad m\geq 1.
$$
}
\indent This follows easily by induction by means of Step 1.

\smallskip

\noindent{\sc Step 3. }{\it Let $x\in\hn$ and suppose $\be\in F$
is an eigenvalue of $R(x)$. Let
$$
V(\be)=\{v\in V\,|\, (x-\be)^m v=0\text{ for some }m\geq 1\}.
$$
Then $V=V(\be)$.
}
\smallskip

By Step 1 we may assume that $x\not\in Z(\hn)$. Then there exists
$y\in \hn$ such that $[x,y]=z$.  Clearly $V(\be)$ is a subspace of
$V$, which is non-zero by assumption. We claim that it is
$\hn$-invariant. Since $\hn=C_{\hn}(x)\oplus F\cdot y$, it
suffices to show that $V(\be)$ is $y$-invariant. Let $v\in
V(\be)$. Then $(x-\be)^m v=0$ for some $m\geq 1$. Since
$[x-\be/\al\cdot z,y]=0$, Steps 1 and 2 give
$$
(x-\be)^{m+1} yv=y (x-\be)^n v +m\al (x-\be)^m v=0.
$$
As $V$ is irreducible, we deduce $V=V(\be)$.

\smallskip

\noindent{\sc Step 4. }{\it Suppose $x,y\in\hn$ satisfy $[x,y]=z$
and that $w\in V$ is an eigenvector of $R(x)$ with eigenvalue
$\be$. Let $U$ be the $F$-span of all $y^i w$, $i\geq 0$. Then $U$
is invariant under $x$, and has basis $w,yw,\dots,y^{pm-1}v$ for
some $m\geq 1$. The matrix of $R(x)|_U$ relative to this basis is
the direct sum of $m$ copies of $M_{\al,\be}\in\gl(p)$, defined by
\begin{equation}
\label{defrep} M_{\al,\be}=\left(\begin{array}{ccccc}
  \be & \al  & 0 & \dots & 0 \\
  0 & \be & 2\al & \dots & 0 \\
  \vdots & \vdots & \ddots & \ddots & \vdots \\
  0 & 0 & \dots & \be & (p-1)\al \\
  0 & 0 & \dots & \dots & \be \\
\end{array}%
\right).
\end{equation}
In particular, the minimal polynomial of $x$ acting on $U$ is
$(X-\beta)^p$. }

\smallskip

Since $[y,-x]=z$, Step 2 gives
\begin{equation}
\label{kk} x y^m w=\beta y^m w+m\al y^{m-1}w,\quad m\geq 1.
\end{equation}
It follows from (\ref{kk}) that $U$ is invariant under the
Heisenberg subalgebra $\langle x,y,z\rangle$ of $\hn$. In
particular, $z$ acts with trace 0 on $U$. Since $z$ acts through
$\al$, with $\al\neq 0$, we must have $p|\dim U$. On the other
hand, $U$ is the $y$-invariant subspace of $V$ generated by $w$,
so it has basis $w,yw,\dots,y^{d-1}w$, where $d$ is the first
positive exponent such that $y^d w$ is a linear combination of
$w,y w,\dots,y^{d-1}w$. Since $p|\dim U$, we must have $d=pm$ for
some $m\geq 1$. It is now clear from (\ref{kk}) that the matrix of
$x$ acting on $U$ relative to the basis $w,y w,\dots,y^{pm-1}w$ is
the direct sum of $m$ copies of $M_{\al,\be}$.

\noindent{\sc Step 5. }{\it There is a common eigenvector $v\in V$
for the action of $z,x_1,\dots,x_n$; the $F$-span of all
$y^{i_1}\cdots y_n^{i_n}v$ such that $0\leq i_1,\dots,i_n<p$ is
$V$; the minimal polynomials of $R(x_k)$ and $R(y_k)$ are
$(X-\be_k)^p$ and $(X-\ga_k)^p$, respectively, for every $1\leq
k\leq n$. }
\smallskip

By hypothesis and Step 3 each $R(z),R(x_1),\dots,R(x_n)$ is
triangularizable. Since they commute pairwise, they are
simultaneously triangularizable. In particular, there is a common
eigenvector $v\in V$ for the action of $z,x_1,\dots,x_n$.

Let $W$ be the $F$-span of all $y^{i_1}\cdots y_n^{i_n}v$, where
$i_1,\dots,i_n\geq 0$. It follows from (\ref{kk}) that $W$ is
$\hn$-invariant. Since $W$ is non-zero, we deduce $W=V$.

Fix $1\leq k\leq n$. Given any sequence of non-negative integers
$i_1,\dots,\widehat{i_k},\dots,i_n$, consider the vector
$w=y_1^{i_1}\cdots \widehat{y_k^{i_k}}\cdots y_n^{i_n}v$, where
the symbol under the hat is to be omitted. Let $U_{w}$ be the
subspace of $V$ spanned by all $y_k^{i_k}w$, $i_k\geq 0$. By
Step~4, $U_w$ is $x$-invariant and either $U_w=0$ or $x$ acts on
$U_w$ with minimal polynomial $(X-\be_k)^p$. Since $U_v$ is
non-zero and $V$ is the sum of all $U_w$, the minimal polynomial
of $R(x_k)$ is $(X-\be_k)^p$.

Since $y_1,\dots,y_n,-x_1,\dots,-x_n,z$ is also a symplectic basis
of $V$, we deduce from above that every $R(y_k)$ has minimal
polynomial $(X-\ga_k)^p$.

It follows that $V$ is spanned by all $y^{i_1}\cdots y_n^{i_n}v$
such that $0\leq i_1,\dots,i_n<p$.

\smallskip

\noindent{\sc Step 6. }{\it The vectors $y^{i_1}\cdots
y_n^{i_n}v$, where $0\leq i_1,\dots,i_n<p$, form a basis of $V$. }
\smallskip

We argue by induction on $n$. The case $n=1$ was proven in Step 4.
Suppose $n>1$ and the result is true for $n-1$. Let $S$ be the
$F$-span of all $y^{i_1}\cdots y_{n-1}^{i_{n-1}}v$ such that
$0\leq i_1,\dots,i_{n-1}<p$. By Step 5, we have
\begin{equation}
\label{SyV} S+y_n S+\cdots+y_n^{p-1} S=V.
\end{equation}
We claim that this sum is direct. Indeed, let
$s_0,\dots,s_{p-1}\in S$ and assume
$$
s_0+y_n s_1+\cdots+y_n^{p-1} s_{p-1}=0.
$$
Suppose, if possible, that not all $y^j s_j$ are 0, and choose $j$
as large as possible subject to  $y^j s_j\neq 0$. Using
(\ref{kk}), we obtain
$$
0=(x_n-\be_n)^{j}(s_0+y_n s_1+\cdots+y_n^{p-1} s_{p-1})=j! \al^{j}
s_j\neq 0,
$$
a contradiction.

Now $S$ is a non-zero $\h(n-1)$-submodule of $V$. Suppose, if
possible, that $S$ is reducible and let $T$ be a non-zero proper
$\h(n-1)$-submodule of $S$. Then
$$
T\oplus y_n T\oplus \cdots\oplus y_n^{p-1} T
$$
is  a non-zero proper $\hn$-submodule of $V$, a contradiction.
Therefore $S$ is irreducible. This and Step 3 allow us to apply
the inductive hypothesis to obtain that all $y^{i_1}\cdots
y_n^{i_{n-1}}v$, such that $0\leq i_1,\dots,i_{n-1}<p$, are
linearly independent. This, the fact that the sum (\ref{SyV}) is
direct and Step 4 imply that all vectors $y^{i_1}\cdots
y_n^{i_n}v$, such that $0\leq i_1,\dots,i_n<p$, are linearly
independent.

\smallskip

\noindent{\sc Step 7. }{\it $V$ is isomorphic to
$V_{\al,\be_1,\dots,\be_n,\ga_1,\dots,\be_n}$.}

\smallskip

Since $x_1,\dots,x_n,y_1-\ga_1/\al\cdot z,\dots,y_n-\ga_n/\al\cdot
z,z$ is also a symplectic basis of $\hn$, it follows from Step 6
that all $(y_1-\ga_1)^{i_1}\cdots (y_n-\ga_n)^{i_n}v$, such that
$0\leq i_1,\dots,i_{n}<p$, also form a basis of $V$. We easily
verify that the action of $x_1,\dots,x_n,y_1,\dots,y_n,z$ on this
basis is the same as the action of these elements on the basis of
 $V_{\al,\be_1,\dots,\be_n,\ga_1,\dots,\ga_n}$ associated to
all $X_1^{i_1}\cdots X_n^{i_n}$, where $0\leq i_1,\dots,i_{n}<p$.
\end{proof}

\begin{cor} \label{co0} Suppose $F$ has prime characteristic $p$. Let
$R:\hn\to\gl(V)$ be a faithful irreducible representation. Then
$\dim V=p^n\times m$ for some $m\geq 1$.
\end{cor}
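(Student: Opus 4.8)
The plan is to reduce to the algebraically closed case, which is already settled by Theorem~\ref{rephn}, by means of extension of scalars, the one delicate point being the behaviour of the central element $z$. First I would record that $R(z)$ is invertible. Since $z\in Z(\hn)$, the operator $R(z)$ commutes with every element of the image of $R$, so it is an endomorphism of the irreducible $\hn$-module $V$; by Schur's lemma $R(z)$ is either $0$ or invertible, and faithfulness of $R$ forces $R(z)\neq 0$, whence $R(z)$ is invertible.

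Next I would extend scalars. Let $\overline{F}$ be an algebraic closure of $F$ and set $\overline{V}=V\otimes_F\overline{F}$, which is a module over $\h(n,\overline{F})=\hn\otimes_F\overline{F}$ with $\dim_{\overline{F}}\overline{V}=\dim_F V$. Because $R(z)$ is invertible, the element $z$ still acts invertibly on $\overline{V}$, so $0$ is not among its eigenvalues. Now choose a composition series of $\overline{V}$ as an $\h(n,\overline{F})$-module. On each composition factor $W$ the central element $z$ acts as a scalar, since $W$ is irreducible over the algebraically closed field $\overline{F}$; moreover this scalar is an eigenvalue of $z$ on $\overline{V}$ and hence nonzero. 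Thus each $W$ is a faithful irreducible $\h(n,\overline{F})$-module, and as $\overline{F}$ is algebraically closed the eigenvalue hypothesis of Theorem~\ref{rephn} holds automatically, so that theorem yields $\dim_{\overline{F}}W=p^n$.

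Summing dimensions over the composition series then gives $\dim_F V=\dim_{\overline{F}}\overline{V}=p^n m$, where $m\geq 1$ is the number of composition factors, which is the desired conclusion.

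The step I expect to be the main obstacle is precisely that $\overline{V}$ need not be irreducible, so one cannot apply Theorem~\ref{rephn} to $\overline{V}$ directly and must instead pass to its composition factors. The crucial input that makes this manoeuvre succeed is the invertibility of $R(z)$: it guarantees that $z$ acts by a nonzero scalar on every factor, so that each factor is faithful and therefore has dimension exactly $p^n$; without this the factors might be non-faithful and of smaller dimension.
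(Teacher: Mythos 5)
Your proof is correct and follows essentially the same route as the paper: extend scalars to an algebraic closure, take a composition series of $V\otimes\overline{F}$, observe that $z$ acts by a nonzero scalar on each composition factor so every factor is faithful, and apply Theorem~\ref{rephn} to conclude each factor has dimension $p^n$. The only difference is bookkeeping: the paper secures faithfulness of the factors by noting that the minimal polynomial of $R(z)$ is irreducible with nonzero roots and decomposing $V\otimes\overline{F}$ into generalized eigenspaces of $z$, whereas you reach the same conclusion slightly more directly via Schur's lemma (invertibility of $R(z)$) together with the observation that the scalar by which $z$ acts on each factor is an eigenvalue of $z$ on the whole space, hence nonzero.
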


\begin{proof} Let $f\in F[X]$ be the minimal polynomial of $R(z)$.
Since $z$ is central in $\hn$ and $R$ is irreducible, we see that
$f$ is irreducible. Moreover, since $R$ is faithful, 0 is not a
root of $f$. Let $K$ be an algebraic closure of $F$. Then
$V_K=V\otimes K$ is a faithful representation of $\h(n,K)$. Let
$\al_1,\dots,\al_r$ be the roots of $f$ in $K$. Let $V_K(\al_i)$
be the generalized $\al_i$-eigenspace of $z$ acting on $V_K$. Then
$$
V_K=V_K(\al_1)\oplus\cdots\oplus V_K(\al_r),
$$
where each $V_K(\al_i)$ is an $\h(n,K)$-submodule of $V_K$.
Combining compositions series for each individual $V_K(\al_i)$
produces a composition series for $V_K$, where all composition
factors are faithful. This and Theorem \ref{rephn} yield the
desired result.
\end{proof}

\begin{cor}\label{co1} Suppose $F$ has prime characteristic $p$. Let
$R:\hn\to\gl(V)$ be a faithful representation of dimension $p^n$.
Then

(1) $R$ is irreducible and $R(z)=\al\cdot I$, where $\al\in F$ is
non-zero.

(2) The minimal polynomial of each $R(x_k)$ (resp. $R(y_i)$) is of
the form $X^p-\delta_k$ (resp. $X^p-\varepsilon_k$), where
$\delta_k\in F$ (resp. $\varepsilon_k\in F$).

(3) If $R':\hn\to \gl(V')$ is also faithful representation of
dimension $p^n$ then $V'\cong V$ if and only if
\begin{equation}
\label{nue} \al'=\al,\,\dl_k=\dl'_k,\,
\varepsilon'_k=\varepsilon_k,\quad 1\leq k\leq n.
\end{equation}

(4) If $F$ is perfect then $V\cong
V_{\al,\be_1,\dots,\be_n,\ga_1,\dots,\ga_n}$, where
$\be_1,\dots,\be_n,\ga_1,\dots,\ga_n\in F$ are the eigenvalues of
$x_1,\dots,x_n,y_1,\dots,y_n$ acting on $V$.
\end{cor}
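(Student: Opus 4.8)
The plan is to prove Corollary~\ref{co1} by leveraging the classification in Theorem~\ref{rephn} together with the dimension count from Corollary~\ref{co0}, the key observation being that the constraint $\dim V = p^n$ forces $m=1$ in Corollary~\ref{co0} and thereby rules out any nontrivial composition series over the algebraic closure.

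\medskip

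\noindent\emph{Part (1).} First I would base-change to an algebraic closure $K$ and argue that $V_K$ must be irreducible. By the proof of Corollary~\ref{co0}, $V_K$ decomposes along the generalized eigenspaces of $R(z)$, and every composition factor is a faithful irreducible $\h(n,K)$-module, hence of dimension $p^n$ by Theorem~\ref{rephn}. Since $\dim_K V_K = \dim_F V = p^n$, there can be exactly one composition factor, so $V_K$ is irreducible; in particular $V$ itself is irreducible. Applying Step~1 of Theorem~\ref{rephn} to $V_K$ gives $R(z) = \al\cdot I$ with $\al\in K$ nonzero, but since $R(z)$ is defined over $F$ we have $\al\in F$.

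\medskip

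\noindent\emph{Part (2).} By Step~5 of Theorem~\ref{rephn} applied over $K$, the minimal polynomial of $R(x_k)$ acting on $V_K$ is $(X-\be_k)^p$ for some $\be_k\in K$. In characteristic $p$ this equals $X^p - \be_k^p$, so setting $\dl_k = \be_k^p$ we obtain a minimal polynomial of the form $X^p - \dl_k$. The minimal polynomial is unchanged under base change, so this already holds over $F$; moreover $\dl_k = \be_k^p$ is the constant term of a polynomial with coefficients in $F$, hence $\dl_k\in F$. The analogous statement for $R(y_k)$ with $\varepsilon_k = \ga_k^p$ follows identically, using that $y_1,\dots,y_n,-x_1,\dots,-x_n,z$ is again a symplectic basis.

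\medskip

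\noindent\emph{Part (3).} This is the step I expect to be the main obstacle, since it requires showing the invariants $(\al,\dl_k,\varepsilon_k)$ are a \emph{complete} system, i.e.\ that they determine $V$ up to isomorphism. For the forward direction, an isomorphism $V'\cong V$ intertwines the actions, so $R'(z)=R(z)$ forces $\al'=\al$, and similarity of $R'(x_k)$ to $R(x_k)$ forces equality of minimal polynomials, giving $\dl'_k=\dl_k$ and $\varepsilon'_k=\varepsilon_k$. The converse is the delicate part: assuming the invariants agree, I would pass to $K$, where by Part~(1) both $V_K$ and $V'_K$ are faithful irreducible of dimension $p^n$. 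The invariants $\dl_k=\be_k^p$ and $\dl'_k=(\be'_k)^p$ agreeing gives $\be_k=\be'_k$ (taking $p$-th roots is injective in characteristic $p$), and likewise $\ga_k=\ga'_k$; Theorem~\ref{rephn} then identifies both $V_K$ and $V'_K$ with the same $V_{\al,\be_1,\dots,\ga_n}$, so $V_K\cong V'_K$. The remaining issue is descending this $K$-isomorphism to $F$, which I would handle by noting that $V$ and $V'$ are absolutely irreducible (being irreducible of dimension equal to the common $K$-dimension $p^n$ with no room for further splitting), so that isomorphism over $K$ implies isomorphism over $F$.

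\medskip

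\noindent\emph{Part (4).} When $F$ is perfect, every element of $F$ has a $p$-th root in $F$, so $\dl_k = \be_k^p$ yields $\be_k\in F$, and the minimal polynomial $X^p-\dl_k = (X-\be_k)^p$ shows $\be_k$ is an eigenvalue of $R(x_k)$ lying in $F$; similarly $\ga_k\in F$ is an eigenvalue of $R(y_k)$. Thus the eigenvalue hypothesis of Theorem~\ref{rephn} is satisfied over $F$ itself, and that theorem directly gives $V\cong V_{\al,\be_1,\dots,\be_n,\ga_1,\dots,\ga_n}$.
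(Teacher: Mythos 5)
Your overall route is the paper's own: base-change to an algebraic closure $K$, use Theorem \ref{rephn} plus a dimension count to get irreducibility and the scalar action of $z$, read off the minimal polynomials over $K$ for part (2), and descend the $K$-isomorphism in part (3). For that descent the paper cites \cite{CR2}, \S 29 (a Noether--Deuring type result); your substitute via absolute irreducibility is sound in substance, since $\mathrm{Hom}_{\hn}(V,V')\otimes_F K\cong \mathrm{Hom}_{\h(n,K)}(V_K,V'_K)\neq 0$ and a nonzero homomorphism between irreducibles of equal dimension is an isomorphism. Parts (2) and (4) match the paper essentially verbatim.

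The genuine gap is the opening claim of your Part (1): that every composition factor of $V_K$ is a faithful irreducible $\h(n,K)$-module. In Corollary \ref{co0} that was available because $V$ was assumed irreducible there: irreducibility forces the minimal polynomial of the central element $z$ to be irreducible with (by faithfulness) nonzero roots, so $z$ acts with a nonzero scalar on every composition factor, and since an ideal of $\hn$ not containing $z$ is zero, each factor is faithful. In the present corollary irreducibility is the \emph{conclusion}, not a hypothesis, and the claim simply fails when $R(z)$ is nilpotent, in which case no composition factor is faithful and your count has nothing to count. Concretely, for $p=3$, $n=1$, the strictly upper triangular representation $x_1\mapsto E_{12}$, $y_1\mapsto E_{23}$, $z\mapsto E_{13}$ of $\h(1)$ is faithful of dimension $3=p^n$, yet $R(z)$ is nilpotent and the module is visibly reducible; padding the standard $(n+2)$-dimensional faithful module with a trivial summand gives such examples whenever $p^n\geq n+2$. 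Since parts (2)--(4) are all downstream of (1), the whole argument hinges on this point.

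You are, however, in good company: the paper's own one-line proof of (1) (``By Theorem \ref{rephn}, $V\otimes K$ is an irreducible $\h(n,K)$-module'') makes exactly the same unjustified leap, and the example above shows that statement (1) --- and consequently Corollary \ref{co2}(2) --- is false as written. The statement needs an additional hypothesis such as ``$R(z)$ is not nilpotent'' (or outright irreducibility of $V$); under that hypothesis your argument does go through, because some generalized eigenvalue $\al_i$ of $z$ on $V_K$ is nonzero, the corresponding summand $V_K(\al_i)$ then has a faithful composition factor of dimension $p^n$ by Theorem \ref{rephn}, and the dimension count forces $V_K=V_K(\al_i)$ to be irreducible. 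So the verdict is: your proposal faithfully reproduces the paper's proof, including its one genuine flaw, and you should flag the missing hypothesis rather than paper over the composition-factor step.
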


\begin{proof} (1) Let $K$ be an algebraic closure of $F$. Then
$V\otimes K$ is a faithful $\h(n,K)$-module of dimension $p^n$. By
Theorem \ref{rephn}, $V\otimes K$ is an irreducible
$\h(n,K)$-module, whence $V$ is an irreducible $\hn$-module.

Since $V\otimes K$ is irreducible, the minimal polynomial of $z$
acting on $V\otimes K$ has degree 1. This is the same as the
minimal polynomial of $z$ acting on $V$. It follows that
$R(z)=\al\cdot I$, where $\al\in F$ is non-zero.

(2) As seen in the proof of Theorem \ref{rephn}, each $x_k$ (resp.
$y_k$) acts on $V\otimes K$ with minimal polynomial
$(X-\be_k)^p=X^p-\be_k^p$ (resp. $(X-\ga_k)^p=X^p-\ga_k^p$), where
$\be_k\in K$ (resp. $\ga_k\in K$). This is the same as the minimal
polynomial of $x_k$ (resp. $y_k$) acting on $V$, so $\be_k^p\in F$
(resp. $\ga_k^p\in F$).

(3) Clearly, if $V\cong V'$ then (\ref{nue}) is true. Suppose,
conversely, that (\ref{nue}) holds. It follows that every element
of the symplectic basis $x_1,\dots,x_n,y_1,\dots,y_n,z$ acts with
same eigenvalues on $V\otimes K$ and $V'\otimes K$, whence
$V\otimes K\cong V'\otimes K$, by Theorem~\ref{rephn}. It follows
from \cite{CR2}, \S 29, applied to the universal enveloping
algebra of $\hn$, that $V\cong V'$.

(4) If $F$ is perfect then $\be_k\in F$ (resp. $\ga_k\in F$),
whence $V\cong V_{\al,\be_1,\dots,\be_n,\ga_1,\dots,\ga_n}$, by
Theorem \ref{rephn}
\end{proof}

\begin{cor}\label{co2} Suppose $F$ has prime characteristic $p$. Let $A,B,C
\in\gl(p)$ be any matrices satisfying
\begin{equation}
\label{nue2} [A,B]=C\neq 0,\, [A,C]=0=[B,C].
\end{equation}
Then

(1) The only subspaces of the column space $V=F^p$ invariant under
$A$ and $B$ are $0$ and $V$.

(2) $C=\al\cdot I$, where $\al\in F$ is non-zero.

(3) $A$ (resp. $B$) is similar to the companion matrix of the
polynomial $X^p-|A|$ (resp. $X^p-|B|$).

(4) Suppose $A',B',C' \in\gl(p)$ also satisfy (\ref{nue2}). Then
there exists $X\in\GL(p,F)$ such that
$$
X^{-1}AX=A',\, X^{-1}BX=B',\, X^{-1}CX=C'
$$
if and only if
$$
|A|=|A'|,\, |B|=|B'|,\, |C|=|C'|.
$$

(5) If $F$ is perfect then $\be=|A|^{1/p}\in F$, $\ga=|B|^{1/p}\in
F$ and there exists $X\in\GL(p,F)$ such that
$$
A'=X^{-1}AX\text{ and }B'=X^{-1}BX
$$
satisfy
$$
A'=\left(\begin{array}{ccccc}
  \be & \al  & 0 & \dots & 0 \\
  0 & \be & 2\al & \dots & 0 \\
  \vdots & \vdots & \ddots & \ddots & \vdots \\
  0 & 0 & \dots & \be & (p-1)\al \\
  0 & 0 & \dots & \dots & \be \\
\end{array}%
\right),\, B'=\left(\begin{array}{cccc}
  \ga & 0   & \dots & 0 \\
  1 & \ga & \dots & 0 \\
  \vdots & \ddots & \ddots & \vdots \\
  0 &  \dots & 1 & \ga \\
\end{array}%
\right).
$$
\end{cor}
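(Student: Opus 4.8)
The plan is to recognize that the relations (\ref{nue2}) are exactly the defining relations of $\h(1)$, and then read off every assertion from Corollary \ref{co1} applied with $n=1$. Concretely, I would define $R:\h(1)\to\gl(p)$ on the symplectic basis by $R(x_1)=A$, $R(y_1)=B$, $R(z)=C$. The conditions $[A,B]=C$ and $[A,C]=0=[B,C]$ say precisely that $R$ respects $[x_1,y_1]=z$ and the centrality of $z$, so $R$ is a genuine representation on $V=F^p$; since $C\neq 0$ we have $R(z)\neq 0$, so $R$ is faithful, and $\dim V=p=p^1$. Thus all four parts of Corollary \ref{co1} are available.

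From here each item is a translation. Part (2) is immediate: Corollary \ref{co1}(1) gives $R(z)=C=\al\cdot I$ with $\al\in F$ nonzero. Part (1) then follows because, $C$ being scalar, a subspace of $V$ is invariant under $A$ and $B$ if and only if it is invariant under $A,B,C$, i.e.\ is an $\h(1)$-submodule; and Corollary \ref{co1}(1) asserts $R$ is irreducible, so the only such subspaces are $0$ and $V$. For part (3), Corollary \ref{co1}(2) gives that the minimal polynomial of $A$ is $X^p-\dl_1$ for some $\dl_1\in F$; since its degree equals $\dim V=p$, it coincides with the characteristic polynomial of $A$, so $A$ is nonderogatory and hence similar to the companion matrix of $X^p-\dl_1$. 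Comparing constant terms of $X^p-\dl_1$ with $\det(XI-A)$ yields $\dl_1=(-1)^{p+1}|A|=|A|$ (the sign being $+1$ for odd $p$ and also for $p=2$ since then $-1=1$), so $A$ is similar to the companion matrix of $X^p-|A|$, and likewise for $B$.

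For part (4), a single $X\in\GL(p,F)$ with $X^{-1}AX=A'$, $X^{-1}BX=B'$, $X^{-1}CX=C'$ is exactly the datum of an isomorphism $V\cong V'$ of $\h(1)$-modules, where $A',B',C'$ define $V'$ as above. By Corollary \ref{co1}(3) such an isomorphism exists iff $\al=\al'$, $\dl_1=\dl'_1$ and $\varepsilon_1=\varepsilon'_1$. I would then match these invariants to the determinants: part (3) gives $\dl_1=|A|$ and $\varepsilon_1=|B|$, while $C=\al\cdot I$ gives $|C|=\al^p$; since the Frobenius map is injective on a field, $\al=\al'$ is equivalent to $\al^p=(\al')^p$, i.e.\ to $|C|=|C'|$. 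Hence the isomorphism condition becomes $|A|=|A'|$, $|B|=|B'|$, $|C|=|C'|$. Finally, for part (5), perfectness of $F$ makes the Frobenius surjective, so $\be=|A|^{1/p}$ and $\ga=|B|^{1/p}$ lie in $F$ and are the eigenvalues of $A$ and $B$ (the minimal polynomials being $(X-\be)^p$ and $(X-\ga)^p$); Corollary \ref{co1}(4) gives $V\cong V_{\al,\be,\ga}$, and writing out the action of $x_1=\be\cdot I+\al\,\partial/\partial X$ and $y_1=\ga\cdot I+m_X$ on the monomial basis $1,X,\dots,X^{p-1}$ of $F[X]/(X^p)$ reproduces exactly the displayed matrices, with the module isomorphism supplying the conjugating $X\in\GL(p,F)$.

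I expect no serious obstacle, since the corollary is essentially a dictionary entry for Corollary \ref{co1}; the only points demanding care are the determinant bookkeeping in part (3) (the characteristic-$2$ sign) and, in part (4), the correct identification $|C|=\al^p$ together with the injectivity of Frobenius that lets $\al=\al'$ be recovered from $|C|=|C'|$.
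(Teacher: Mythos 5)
Your proposal is correct and follows exactly the paper's route: the paper's entire proof of this corollary is ``This follows from Corollary \ref{co1},'' and you have simply carried out that dictionary in full, including the determinant bookkeeping ($\dl_1=(-1)^{p+1}|A|=|A|$ even when $p=2$) and the identification $|C|=\al^p$ with injectivity of Frobenius, all of which check out. Nothing to add; your write-up is a faithful, detailed expansion of the intended argument.
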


\begin{proof} This follows from Corollary \ref{co1}.
\end{proof}

\begin{cor}\label{co3} Suppose $F$ has prime characteristic $p$ and let
$\al,\dl_1,\dots,\dl_{p-1}\in F$, where $\al\neq 0$. Then the
matrix $D\in\gl(p)$, defined by
\begin{equation}
D=\label{mezx}
\left(%
\begin{array}{cccccc}
  0 & \al & 0 & \dots & \dots & 0 \\
  \dl_1 & 0 & 2\al & \dots &  \dots & 0 \\
  \dl_2 & \dl_1 & 0 & 3\al & \dots & 0 \\
  \vdots & \ddots & \ddots & \ddots & \ddots & \vdots \\
   \dl_{p-2} & \dots & \dl_2 & \dl_1 & 0 & (p-1)\al \\
  \dl_{p-1} & \dl_{p-2} & \dots & \dl_2 & \dl_1 & 0 \\
\end{array}%
\right)
\end{equation}
is similar to the companion matrix of the polynomial $X^p-|D|$. In
particular, if $F$ is perfect, then $D$ is similar to the Jordan
block $J_p(|D|^{1/p})$.
\end{cor}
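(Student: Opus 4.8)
The plan is to fit $D$ into a Heisenberg triple so that Corollary~\ref{co2}(3) applies directly. Let $B\in\gl(p)$ be the nilpotent ``shift'' matrix whose only non-zero entries are $1$'s along the first subdiagonal, so that $B^k$ carries $1$'s along the $k$-th subdiagonal and $B^p=0$, and let $A=M_{\al,0}$ be the matrix of (\ref{defrep}) with $\be=0$, i.e. the strictly upper triangular matrix with superdiagonal $\al,2\al,\dots,(p-1)\al$. Inspecting (\ref{mezx}) shows that
\begin{equation*}
D=A+\dl_1 B+\dl_2 B^2+\cdots+\dl_{p-1}B^{p-1},
\end{equation*}
since the strictly upper triangular part of $D$ is exactly $A$ while its $k$-th subdiagonal is the constant $\dl_k$.

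The single computation driving the argument is the relation $[A,B]=\al\cdot I$. I would verify it by hand: $AB-BA$ is diagonal with entries $\al,\al,\dots,\al,-(p-1)\al$, and in characteristic $p$ the anomalous last entry $-(p-1)\al$ equals $\al$, so $[A,B]=\al\cdot I$. (Conceptually, $A,B,\al\cdot I$ are the images of $x,y,z$ in the $\h(1)$-module $V_{\al,0,0}$ realized on $F[X]/(X^p)$, where $x$ acts as $\al\,\partial/\partial X$, $y$ as $m_X$, and $z$ as $\al\cdot I$.) Since each power $B^k$ commutes with $B$, the lower triangular correction contributes nothing to the bracket with $B$, whence
\begin{equation*}
[D,B]=[A,B]=\al\cdot I,
\end{equation*}
and of course $[D,\al\cdot I]=0=[B,\al\cdot I]$ because $\al\cdot I$ is central.

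Thus $(D,B,\al\cdot I)$ satisfies (\ref{nue2}) with $\al\neq 0$. Applying Corollary~\ref{co2}(3), now with $D$ playing the role of $A$, we conclude that $D$ is similar to the companion matrix of $X^p-|D|$, which is the main assertion. For the final statement, suppose $F$ is perfect. Then the Frobenius map is surjective, so $|D|=\m^p$ with $\m=|D|^{1/p}\in F$, and in characteristic $p$ we have $X^p-|D|=X^p-\m^p=(X-\m)^p$. The companion matrix of any monic degree-$p$ polynomial is non-derogatory with minimal polynomial equal to that polynomial; here the minimal polynomial $(X-\m)^p$ has degree $p$, forcing a single Jordan block. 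Hence $D$ is similar to $J_p(\m)=J_p(|D|^{1/p})$.

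I expect the only genuine obstacle to be spotting the decomposition $D=A+\sum_k\dl_k B^k$ together with the right partner matrix $B$; once the commutator $[D,B]=\al\cdot I$ is in hand, the conclusion is immediate from the matrix form of Corollary~\ref{co2}, and the rest is bookkeeping with the Frobenius and the companion/Jordan correspondence.
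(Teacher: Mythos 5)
Your proof is correct and is essentially the paper's own argument: the paper likewise writes $D=A'+f(B')$ with $f(X)=\dl_1X+\cdots+\dl_{p-1}X^{p-1}$, observes $[D,B']=[A',B']=\al\cdot I$ since $f(B')$ commutes with $B'$, and then invokes Corollary~\ref{co2}. Your explicit verification of $[A,B]=\al\cdot I$ (with the last diagonal entry $-(p-1)\al=\al$ in characteristic $p$) and the Frobenius/Jordan-block bookkeeping merely spell out details the paper leaves implicit.
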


\begin{proof} Let $A'$ and $B'$ be as defined in Corollary
\ref{co2} and let $C=\al\cdot I$. There is a polynomial $f\in
F[X]$ such that $D=A'+f(B')$. Since $[A',B']=C$, we infer
$$
[D,B']=C,\, [D,C]=0=[B',C],
$$
so Corollary \ref{co2} applies.
\end{proof}

\begin{exa}\label{eq} Take $\dl_1=1$ and $\dl_i=0$ if $2\leq i<p$ in
Corollary \ref{co3}. If $p=2$ then $D$ is the companion matrix of
$X^2-\al$, while if $p>2$ then row 1 of $D$ is a linear
combination of the remaining odd numbered rows, so $D$ is
nilpotent.
\end{exa}

\begin{note}{\rm Suppose the hypotheses of Theorem \ref{rephn} are
met and let $\g=\hn$. It is not true, in general, that every
$x\in\g\setminus Z(\g)$ acts on $V$ with minimal polynomial
$(X-\be)^p$ for some $\be\in F$. But it is almost always true. In
fact, the only exception occurs when $p=2$, $\al\notin F^2$ and
$x=s+t$, where $s$ and $t$ are in $F$-span of $x_1,\dots,x_n$ and
$y_1,\dots,y_n$, respectively, and $[s,t]\neq 0$.

Indeed, let $x\in\g\setminus Z(\g)$ and let $K$ be an algebraic
closure of $F$. Consider the Lie algebra $\g_K=\g\otimes K$ over
$K$. It follows from Theorem~\ref{rephn} (or Corollary \ref{co1})
that $V_K=V\otimes K$ is an irreducible $\g_K$-module. The minimal
polynomials of $x$ acting on $V$ and $V_K$ are the same. Call this
common polynomial $f\in F[X]$. Since $x$ belongs to a symplectic
basis of $\g_K$, Theorem \ref{rephn} implies that $f=(X-\be)^p$,
where $\be\in K$. We need to decide when $f$ has a root in $F$.
Now $x=s+t$, where $s$ is in the $F$-span of $x_1,\dots,x_n$ and
$t$ in the $F$-span of $y_1,\dots,y_n$. The hypotheses of
Theorem~\ref{rephn} ensure that both $s$ and $t$ have eigenvalues
in $F$, say $\gamma$ and $\delta$, respectively. If $[s,t]=0$ it
easily follows that $R(x)$ has a root in $F$. Scaling $s$, if
necessary, by a non-zero element of $F$, we may assume without
loss of generality that $[s,t]=z$. Let $s'=s-\ga/\al\cdot z$,
$t'=t-\dl/\al\cdot z$ and $x'=s'+t'$. Since $R(x)$ and $R(x')$
differ by a scalar operator -the scalar being in $F$-, we may
replace $R(x)$ by $R(x')$ without loss. Now both $R(s')$ and
$R(t')$ have minimal polynomial $X^p$. Let $w$ be an eigenvector
for $s'$. Then $w,t'w,\dots,(t')^{p-1}w$ form a basis for a
subspace $U$ of $V$ that is invariant under $\langle
s',t',z\rangle$. Relative to this basis, the matrix of $R(x')|_U$
is the matrix $D$ considered in Example \ref{eq}. Thus, if $p>2$
then $D$ is nilpotent and $R(x')$ has a root in $F$. If $p=2$ then
the minimal polynomial of $R(x')$ is divisible by, and hence equal
to, $X^2-\al$. This has a root in $F$ if and only if $\al\in F^2$.
}
\end{note}

\begin{note}{\rm The condition that $F$ be perfect is essential in
Corollary \ref{co1}. Indeed, suppose $F$ is imperfect and let
$\gamma\notin F^p$. Then $X^p-\ga\in F[X]$ is irreducible. Let
$\al\in F$ be non-zero, let $C\in\gl(p)$ be the companion matrix
of $X^p-\ga$, and let $M_{\al,\be}$ be defined as in
(\ref{defrep}).  Then
$$
x_1\mapsto M_{\al,\be},\, y_1\mapsto C, z\mapsto \al\cdot I
$$
defines a faithful irreducible representation of $\h(1)$ of
dimension $p$, through which $y_1$ acts with no eigenvalues from
$F$. }
\end{note}

\section{Faithful representations of minimum degree}\label{sec3}

For the proof of the following result we rely on \cite{B} as well
as on the classification of irreducible $\h(n)$-modules when $F$
is algebraically closed of prime characteristic, as given in
Theorem \ref{rephn}

\begin{theorem} Let $F$ be any field and let $n\geq 1$. Let $\h(n)$ be the
Hesienberg algebra of dimension $2n+1$ over $F$ and let $d(n)$ be
the minimum dimension of a faithful $\h(n)$-module. Then
$d(n)=n+2$, except only that $d(1)=2$ when $\mathrm{char}(F)=2$.
\end{theorem}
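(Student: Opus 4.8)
The plan is to prove the equality $d(n) = n+2$ by establishing the lower bound $d(n) \geq n+2$ and the matching upper bound $d(n) \leq n+2$ separately, handling the exceptional case $(n=1, \operatorname{char}(F)=2)$ by an explicit construction. The upper bound is the easy direction and is characteristic-independent: I would exhibit a faithful $\h(n)$-module of dimension $n+2$, for instance the standard representation on $F^{n+2}$ where the $x_i$, $y_i$, $z$ act by the natural strictly-upper-triangular Heisenberg matrices (a single $z$ in the corner, the $x_i$ along the top row, the $y_i$ down the last column). One checks directly that $[x_i,y_i]=z$ and that $z$ acts nontrivially, so the representation is faithful; this gives $d(n) \leq n+2$ for every field $F$ and every $n$.

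For the lower bound in characteristic $0$, I would simply invoke Burde's theorem \cite{B}, which gives $d(n) = n+2$ directly. The real content is the lower bound in prime characteristic $p$, and here the strategy is to reduce to the irreducible case and then apply the dimension constraint from Corollary \ref{co0}. Let $R : \h(n) \to \gl(V)$ be any faithful module of minimal dimension $d(n)$. Passing to an algebraic closure $K$ does not change faithfulness or dimension, so I may assume $F$ algebraically closed. Now $V$ need not be irreducible, but since $z$ is central and $R(z)\neq 0$, at least one composition factor $W$ of $V$ must have $z$ acting nontrivially, hence is itself faithful and irreducible. By Corollary \ref{co0}, $\dim W = p^n m$ for some $m\geq 1$, so $d(n) \geq \dim W \geq p^n$. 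The crux is then the elementary inequality $p^n \geq n+2$, which must be verified to hold for all relevant $(p,n)$ with exactly one failure.

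The main obstacle, and the only delicate point, is pinning down precisely when $p^n < n+2$. Checking small cases: for $n\geq 2$ one has $p^n \geq 2^n \geq n+2$, so the lower bound $d(n)\geq p^n \geq n+2$ already suffices there. For $n=1$ the bound from Corollary \ref{co0} only gives $d(1)\geq p$, which equals $n+2 = 3$ when $p\geq 3$ but gives only $d(1)\geq 2$ when $p=2$. Thus the argument above forces $d(n)=n+2$ for all cases except possibly $(n=1,p=2)$, where it leaves open whether $d(1)=2$ or $d(1)=3$. To close this gap I would first note that $d(1)\geq 2$ is automatic (a faithful module has dimension at least $2$ since $z$ acting as a nonzero nilpotent-or-scalar operator cannot be faithful in dimension $1$), and then exhibit an explicit faithful $\h(1)$-module of dimension $2$ in characteristic $2$: this is exactly the $p=1$... rather the $n=1$ instance of the irreducible modules from Proposition \ref{modhn}, namely $V_{\al,\be,\ga} = F[X]/(X^2)$, which is faithful, irreducible, and has dimension $p = 2$. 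This simultaneously shows $d(1)=2$ in characteristic $2$ and confirms that the exceptional case is the only one, completing the theorem.
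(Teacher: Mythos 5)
Your upper bound and your handling of the exceptional case $(n,p)=(1,2)$ are fine, but the lower bound in prime characteristic contains a genuine gap: the claim that a faithful module must have at least one faithful composition factor is false. If $R(z)$ is a nonzero \emph{nilpotent} operator, then over an algebraically closed field Schur's Lemma forces $z$ to act as the scalar $0$ on every composition factor, so no composition factor is faithful even though $V$ is. This is not a marginal case --- it is precisely the case where the theorem is tight: the standard faithful $(n+2)$-dimensional module you use for the upper bound has $R(z)$ nilpotent, and none of its composition factors is faithful. Your own intermediate conclusion exposes the problem: you derive $d(n)\geq p^n$, which for $n=1$ and $p\geq 5$ gives $d(1)\geq 5$, contradicting the upper bound $d(1)\leq 3$ you established a paragraph earlier. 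The composition-factor argument via Corollary \ref{co0} can only dispose of faithful modules on which $R(z)$ is \emph{not} nilpotent; since you invoke Burde only for characteristic $0$, the nilpotent case in prime characteristic is left untreated, and it is the heart of the matter.

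The paper closes exactly this case with a separate linear-algebra argument valid in all characteristics. Pick $v$ with $R(z)v\neq 0$ and consider the linear map $T:\h(n)\to V$, $T(x)=R(x)v$, with kernel $A$ and image $B$. Since $[\h(n),\h(n)]=F\cdot z$ and $R(z)v\neq 0$, the subalgebra $A$ is abelian with $z\notin A$, so $\dim A\leq n$ and $\dim B\geq n+1$; if $\dim B=n+1$, then $A\oplus F\cdot z$ is a maximal abelian subalgebra, and when $R(z)$ is nilpotent one shows $v\notin B$: if $R(x)v=v$, then $x\notin A\oplus F\cdot z$, so maximality produces $y\in A$ with $[x,y]=z$, whence $R(z)v=R(x)R(y)v-R(y)R(x)v=0$, a contradiction. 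Hence $\dim V\geq \dim B+1=n+2$ in the nilpotent case. Only in the complementary case ($R(z)$ not nilpotent) does the paper pass to the algebraic closure, extract a faithful composition factor $W$, and apply Theorem \ref{rephn} to get $\dim W=p^n\geq n+2$ for $(n,p)\neq(1,2)$ --- the portion of the argument your proposal correctly anticipates. To repair your proof you must add the nilpotent-$R(z)$ case; without it the inequality $d(n)\geq n+2$ is not established. (Your dimension-$2$ construction for $(n,p)=(1,2)$ via Proposition \ref{modhn} is correct and is a legitimate alternative to the paper's observation that $\h(1)\cong\sl(2)$ in characteristic $2$.)
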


\begin{proof} Since $\h(1)\cong \sl(2)$ when $F$ has characteristic
2, it is clear that $d(1)=2$ in this case. Suppose henceforth that
$(n,\mathrm{char}(F))\neq (1,2)$.

The existence of a faithful $\h(n)$-module of dimension $n+2$ is
well-known, so $d(n)\leq n+2$.

Let $R:\h(n)\to\gl(V)$ be faithful module. We wish to show
$\dim(V)\geq n+2$. Since $R$ is faithful, we have $R(z)\neq 0$, so
there is $v\in V$ such that $R(z)v\neq 0$. Consider the linear map
$T:\h(n)\to V$ given by $T(x)=R(x)v$. Let $A=\ker T$ and $B=\im\,
T$. Clearly $A$ is a subalgebra of $\h(n)$. Since
$[\h(n),\h(n)]=F\cdot z$ and $R(z)v\neq 0$, it follows that $A$ is
abelian and $z\not\in A$, whence $\dim A\leq n$, and a fortiori
$\dim B\geq n+1$. If $\dim(B)\geq n+2$, we are done. Suppose
henceforth that $\dim B=n+1$. Then $A\oplus F\cdot z$ is a maximal
abelian subalgebra of $\hn$.

\noindent{\sc Case 1.} $R(z)$ is nilpotent. Suppose, if possible,
that $v\in B$. Then $R(x)v=v$ for some $x\in\hn$. Then $x\not\in
A\oplus F\cdot z$ by the definition of $A$ and the nilpotency
of~$R(z)$. By the maximality of $A$ and the fact that
$[\h(n),\h(n)]=F\cdot z$, there is $y\in A$ such that $[x,y]=z$.
Therefore,
$$
R(z)v=R(x)R(y)v-R(y)R(x)v=0,
$$
a contradiction. Thus, in this case $v\not\in B$, whence $\dim
V\geq \dim B+1=n+2$.

\noindent{\sc Case 2.} $R(z)$ is not nilpotent. Let $K$ be an
algebraic closure of $F$. Then $V_K=V\otimes K$ is an
$\h(n,K)$-module. By assumption, $V_K$ must have at least one
faithful composition factor $W$. By Schur's Lemma, $z$ acts as a
scalar operator on $W$. But $z$ acts with trace 0 on $W$, so $F$
has prime characteristic $p$. Hence, by Theorem \ref{rephn}, we
have $\dim_K W=p^n$. On the other hand $(n,p)\neq (1,2)$, so
$p^n\geq n+2$. Therefore $$\dim_F V=\dim_K V_K\geq \dim_K W\geq
p^n\geq n+2.$$
\end{proof}

\section{Irreducible representations obtained by restriction}\label{sec4}

\begin{prop} Let $K=F[\al]$ be a finite field extension of $F$.
Let $\g$ be a Lie algebra over $F$ and let $\g_K=\g\otimes K$ be
its extension to $K$. Let $R:\g_K\to\gl(V)$ be an irreducible
representation, possibly infinite dimensional. Suppose the
following condition holds:

(C) $\al\cdot I$ is in the associative $F$-subalgebra of
$\mathrm{End}(V)$ generated by $R(\g)$.

\noindent Then $V$ is an irreducible $\g$-module.
\end{prop}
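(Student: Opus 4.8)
The plan is to show that any nonzero $\g$-submodule $W \subseteq V$ is automatically all of $V$, by leveraging condition (C) to promote $\g$-invariance into $\g_K$-invariance. The key observation is that $V$ is already irreducible as a $\g_K$-module, so it suffices to prove that every nonzero $F$-subspace of $V$ invariant under $R(\g)$ is in fact invariant under $R(\g_K)$; irreducibility over $F$ then follows from irreducibility over $K$.

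First I would recall that $\g_K = \g \otimes_F K$, and since $K = F[\al]$, every element of $\g_K$ can be written as an $F$-linear combination of elements of the form $g \otimes \al^j$ for $g \in \g$ and $0 \le j < [K:F]$. Because $R$ is $K$-linear, we have $R(g \otimes \al^j) = \al^j \cdot R(g \otimes 1)$, so the image $R(\g_K)$ is contained in the $F$-span of $\{\al^j \cdot R(g) : g \in \g,\ 0 \le j < [K:F]\}$, where I write $R(g)$ for $R(g \otimes 1)$. Thus to show a $\g$-invariant subspace $W$ is $\g_K$-invariant, it is enough to check that $W$ is stable under multiplication by $\al \cdot I$ (and hence by every $\al^j \cdot I$), since $W$ is already stable under each $R(g)$.

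This is exactly where condition (C) enters. By hypothesis $\al \cdot I$ lies in the associative $F$-subalgebra $\mathcal{A}$ of $\mathrm{End}(V)$ generated by $R(\g)$; that is, $\al \cdot I$ is an $F$-polynomial (with no constant issues beyond scalars in $F$) in the operators $R(g)$, built up using only $F$-linear combinations and compositions. Since $W$ is an $F$-subspace invariant under each $R(g)$, it is invariant under every element of $\mathcal{A}$, and in particular under $\al \cdot I$. Therefore $\al W \subseteq W$, and inductively $\al^j W \subseteq W$ for all $j$, so $W$ is stable under the full $F$-span of the $\al^j R(g)$, i.e. under $R(\g_K)$. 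Hence $W$ is a nonzero $\g_K$-submodule of the irreducible module $V$, forcing $W = V$.

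I expect the main conceptual point — rather than a computational obstacle — to be the clean bookkeeping that $R(\g_K)$ is captured by the $F$-subalgebra $\mathcal{A}$ together with the scalars $\al^j \cdot I$, so that (C) is precisely the bridge making $\g$-invariance coincide with $\g_K$-invariance. Once that identification is made explicit, the argument is a short invocation of the irreducibility of $V$ over $K$. The only subtlety worth stating carefully is that $\mathcal{A}$ is the $F$-subalgebra generated by $R(\g)$ (closed under $F$-scalars and products but a priori not containing $\al \cdot I$), which is why the condition (C) is a genuine hypothesis and not automatic.
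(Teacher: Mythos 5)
Your proof is correct and takes essentially the same approach as the paper: both arguments use condition (C) to show that a nonzero $\g$-submodule is stable under $\al\cdot I$, hence under all of $K=F[\al]$, so that it is a genuine $\g_K$-submodule of the irreducible $\g_K$-module $V$ and must equal $V$. The paper merely packages this slightly differently (it first observes $KU=V$ and then uses (C) to get $KU\subseteq U$), but the substance is identical.
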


\begin{proof} Let $U$ be a non-zero $\g$-submodule of $V$. Then
$KU$ is a non-zero $\g_K$-invariant $K$-subspace of $V$, so
$KU=V$. Since $K=F[\alpha]$, it follows from (C) that $ku\in U$
for all $u\in U$ and $k\in K$, whence $KU\subseteq U$, so $U=V$.
\end{proof}

\begin{note}{\rm Condition (C) cannot be dropped
entirely. Indeed, the natural module for $\sl(2,\Co)$ is
irreducible, but not as $\sl(2,\R)$-module. }
\end{note}

\begin{cor}\label{rds} Suppose that $F$ has prime characteristic $p$ and
let $\alpha$ be an algebraic element of degree $m$ over $F$ and
set $K=F[\alpha]$. Let $f_1,\dots,f_n,g_1,\dots,g_n\in F[X]$. Then
the irreducible $\h(n,K)$-module
$V_{\al,f_1(\al),\dots,f_n(\al),g_1(\al),\dots,g_n(\al)}$ of
dimension $p^n$ restricts to a faithful irreducible
$\h(n,F)$-module of dimension $p^n\times m$.
\end{cor}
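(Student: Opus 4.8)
The plan is to deduce this directly from the preceding Proposition, applied with $\g=\h(n,F)$ and $\g_K=\h(n,K)=\g\otimes K$. First I would set $\be_i=f_i(\al)$ and $\ga_i=g_i(\al)$ for $1\leq i\leq n$. Since these scalars lie in $K$ and $\al\in K$ is nonzero, Proposition \ref{modhn}(3), read over the field $K$, guarantees that
$$
V:=V_{\al,\be_1,\dots,\be_n,\ga_1,\dots,\ga_n}=V_{\al,f_1(\al),\dots,f_n(\al),g_1(\al),\dots,g_n(\al)}
$$
is a well-defined faithful irreducible $\h(n,K)$-module of dimension $p^n$ over $K$. Next I would record the $F$-dimension: because $[K:F]=m$, restriction of scalars gives $\dim_F V=m\cdot\dim_K V=m\,p^n$, which is the claimed dimension.

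The substantive point is the verification of hypothesis (C) of the Proposition, namely that $\al\cdot I$ lies in the associative $F$-subalgebra of $\mathrm{End}(V)$ generated by $R(\h(n,F))$. Here the structure of $V$ does the work for us: by the defining action in Proposition \ref{modhn}, the central basis vector $z$ acts on $V$ as $\al\cdot I$. Since $z\in\h(n,F)$, we have $\al\cdot I=R(z)\in R(\h(n,F))$, so $\al\cdot I$ certainly belongs to the $F$-subalgebra generated by $R(\h(n,F))$. As $K=F[\al]$, condition (C) holds, and the Proposition then yields that $V$ is irreducible as an $\h(n,F)$-module.

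Finally, faithfulness over $F$ is immediate: a representation of $\h(n,F)$ is faithful precisely when the image of $z$ is nonzero, and here $R(z)=\al\cdot I\neq 0$ since $\al\neq 0$. I do not anticipate any genuine obstacle in this argument; the only place that requires a moment's care is recognizing that (C) is satisfied essentially for free, because the center of the Heisenberg algebra acts through the scalar $\al$ that generates $K$ over $F$. This is exactly the feature that forces the restricted module to remain irreducible, and it is what distinguishes the present situation from the $\sl(2,\Co)$-versus-$\sl(2,\R)$ warning in the Note following the Proposition.
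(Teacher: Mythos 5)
Your proof is correct and is precisely the route the paper intends: the corollary is stated as a consequence of the preceding proposition, with condition (C) holding for free because $R(z)=\al\cdot I$ with $z\in\h(n,F)$ and $K=F[\al]$, exactly as you observe. Your additional remarks on faithfulness ($\al\neq 0$) and the dimension count $\dim_F V=m\,p^n$ fill in the routine details the paper leaves implicit.
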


\begin{note}{\rm  Here we furnish a matrix version of
Corollary \ref{rds} when $n=1$.

Let $F$ have prime characteristic $p$. Let $m\geq 1$ and suppose
$q\in F[X]$ is a monic irreducible polynomial of degree $m$ with
companion matrix $C$. Let $f,g\in F[X]$ and consider $A,B,D\in
\gl(pm)$ defined in terms of $m\times m$ blocks as follows:
$$
 A=\left(\begin{array}{ccccc}
  f(C) & C  & 0 & \dots & 0 \\
  0 & f(C) & 2C & \dots & 0 \\
  \vdots & \vdots & \ddots & \ddots & \vdots \\
  0 & 0 & \dots & f(C) & (p-1)C \\
  0 & 0 & \dots & \dots & f(C) \\
\end{array}%
\right),\, B=\left(\begin{array}{cccc}
  g(C) & 0   & \dots & 0 \\
  I & g(C) & \dots & 0 \\
  \vdots & \ddots & \ddots & \vdots \\
  0 &  \dots & I & g(C) \\
\end{array}%
\right),
$$
$$
D=\left(%
\begin{array}{cccc}
  C & 0 & \dots & 0 \\
  0 & C & \dots & 0 \\
  \vdots & \vdots & \ddots & \vdots \\
  0 & 0 & \dots & C \\
\end{array}%
\right).
$$
Then
$$
x_1\mapsto A,\, y_1\mapsto B, z\mapsto D
$$
defines a faithful irreducible $\h(1)$-module of dimension
$p\times m$. }
\end{note}

\section{Irreducible representations obtained from companion matrices}\label{sec5}

Here we produce faithful irreducible representations of $\hn$ not
equivalent to any obtained earlier, as long as $F$ is not
algebraically closed. Recall that a module is said to be uniserial
if its submodules form a chain.

\begin{theorem}\label{cuando} Suppose $F$ has prime characteristic $p$. Let $\al,\be\in F$, with $\al\neq 0$.
Let $M=M_{\al,\be}\in\gl(p)$ be defined as in (\ref{defrep}). Let
$m\geq 1$ and let $f(X)\in F[X]$ be an arbitrary monic polynomial
of degree $m$. Let $C\in \gl(pm)$ be the companion matrix of
$f(X^p)$. Then

(1) The linear map $R:\h(1)\to\gl(pm)$ given by
$$ x_1\mapsto
M\oplus\cdots\oplus M,\, y_1\mapsto C,\, z\mapsto \alpha\cdot I
$$
defines a faithful representation of $\h(1)$, where $x_1$ acts
with minimal polynomial $(X-\be)^p$ and $y_1$ with minimal
polynomial $f(X^p)$.

(2) If $f$ is irreducible then $R$ is
irreducible.

(3) If $f(X)=(X-\ga^p)^m$ for some $\ga\in F$, then $R$ is
uniserial with $m$ composition factors, all isomorphic to
$V_{\al,\be,\ga}$.
\end{theorem}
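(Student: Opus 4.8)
The plan is to realize the abstractly-defined $R$ as the natural action of $\h(1)$ on the quotient algebra $V=F[X]/(f(X^p))$, and then to read off all three parts from this concrete model. On $F[X]$ consider the multiplication operator $m_X$ and the formal derivative $\partial=d/dX$, which satisfy $[\partial,m_X]=I$. Since $\partial(f(X^p))=f'(X^p)\cdot pX^{p-1}=0$ in characteristic $p$, both $m_X$ and $\partial$ preserve the ideal $(f(X^p))$ and hence descend to $V$. I would first check that, in the monomial basis $1,X,\dots,X^{pm-1}$ of $V$, the operator $m_X$ is exactly the companion matrix $C$ of $f(X^p)$ (this is the definition of $C$), while $\beta I+\alpha\partial$ is exactly $M\oplus\cdots\oplus M$. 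The crucial point for the second is that $\partial(X^{jp})=jp\,X^{jp-1}=0$, so $\partial$ preserves each block $\mathrm{span}(X^{jp},\dots,X^{jp+p-1})$ and sends $X^{jp+i}\mapsto i\,X^{jp+i-1}$; thus $\beta I+\alpha\partial$ restricted to the $j$-th block has matrix $M_{\alpha,\beta}$. This identifies $R$ with the action $x_1\mapsto\beta I+\alpha\partial$, $y_1\mapsto m_X$, $z\mapsto\alpha I$ on $V$.

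With this model, part (1) is immediate: the Heisenberg relation $[x_1,y_1]=\alpha[\partial,m_X]=\alpha I=z$ holds (it is inherited from $F[X]$ through the ideal), so $R$ is a representation, faithful since $\alpha\neq0$. The minimal polynomial of $x_1$ is $(X-\beta)^p$ because each block $M_{\alpha,\beta}$ has that minimal polynomial, as computed in the proof of Theorem~\ref{rephn}; and the minimal polynomial of $y_1=C$ is $f(X^p)$ because $C$ is a companion matrix.

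For part (2) I would exploit a change of coefficient field. Put $t=X^p$; if $f$ is irreducible then $L:=F[t]/(f(t))$ is a field of degree $m$ over $F$, acting on $V$ through $m_{X^p}$, and $V$ becomes an $L$-vector space of $L$-dimension $p$. Since each of $x_1,y_1,z$ commutes with $m_{X^p}$, all three are $L$-linear, so $V$ is a faithful $\h(1,L)$-module of $L$-dimension $p=p^1$, hence irreducible over $L$ by Corollary~\ref{co1}(1) applied over $L$. Now any nonzero $\h(1,F)$-submodule $U$ is invariant under $y_1=m_X$, hence under $m_{X^p}$, hence is an $L$-subspace and thus an $\h(1,L)$-subrepresentation; irreducibility over $L$ forces $U=V$.

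For part (3), when $f(X)=(X-\gamma^p)^m$ we have $f(X^p)=(X-\gamma)^{pm}$, so the substitution $W=X-\gamma$ presents $V=F[W]/(W^{pm})$ with $y_1=\gamma I+m_W$, $x_1=\beta I+\alpha\partial_W$, $z=\alpha I$. I would exhibit the chain $V=V_0\supset V_1\supset\cdots\supset V_m=0$ with $V_k=W^{pk}V$, and verify (again using $\partial_W(W^{pk})=0$) that each $V_k$ is a submodule and that each quotient $V_k/V_{k+1}$ is isomorphic to $V_{\alpha,\beta,\gamma}$, which already supplies $m$ composition factors of the stated type. To obtain uniseriality I would induct on $m$: for a nonzero submodule $U$, its image in $\bar V=V/V_1\cong V_{\alpha,\beta,\gamma}$ is a submodule of an irreducible module, so either $U+V_1=V$, whence $U=V$ by Nakayama's lemma over the local ring $F[W^p]/(W^{pm})$, or $U\subseteq V_1$; since $V_1\cong F[W]/(W^{p(m-1)})$ is the same type of module with $m$ replaced by $m-1$ (the isomorphism $\bar g\mapsto W^p g$ intertwines $m_W$ and $\partial_W$), the inductive hypothesis identifies $U$ with some $V_k$. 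Thus the submodules are exactly the $V_k$, a chain. The main obstacle I anticipate is precisely this last step: producing one composition series is easy, but pinning down the \emph{entire} submodule lattice requires combining the irreducibility of the top quotient, Nakayama over the local coefficient ring, and the self-similarity of $V_1$ to run the induction.
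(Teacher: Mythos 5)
Your proof is correct, and for parts (2) and (3) it takes a genuinely different route from the paper. In part (1) the content is the same, organized differently: the paper writes $C$ out in $p\times p$ blocks $N$, $J$, $K_i=k_iJ$ and verifies $[M,N]=\al\cdot I$, $MJ=0=JM$, $MK_i=0=K_iM$ by block multiplication, while you realize $R$ once and for all as $x_1\mapsto \be I+\al\,\partial$, $y_1\mapsto m_X$ on $F[X]/(f(X^p))$ — which is cleaner and also powers your later arguments. For (2) the paper goes up to an algebraic closure $K$, uses Theorem \ref{rephn} to see that $V\otimes K$ has composition factors $V_{\al,\be,\ga_i}$, and rules out a proper submodule $U$ by a divisibility argument: the characteristic polynomial of $y_1$ on $U$ must have the form $h(X^p)$ with $h\in F[X]$ of degree $1\leq k<m$, forcing a factorization $f=hb$ over $F$ and contradicting irreducibility of $f$. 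You instead go down: you locate the field $L=F[y_1^p]\cong F[X]/(f)$ inside the commutant of $R(\h(1))$, make $V$ a faithful $\h(1,L)$-module of $L$-dimension $p$, quote Corollary \ref{co1}(1) over $L$, and note that any $F$-submodule is $y_1$-invariant, hence $L$-stable. This is in effect an instance of the paper's own Proposition in \S\ref{sec4}, with condition (C) supplied by $t=R(y_1)^p$; it is arguably slicker, avoids passing to $\overline{F}$, and shows as a bonus that the case-(2) modules arise by restriction from $p$-dimensional modules over $L$ (these are not of the type in Corollary \ref{rds}, where $z$ acts by a generator of the extension, so there is no tension with Note \ref{imp}). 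For (3) the paper has a one-line uniseriality proof worth knowing: every $\h(1)$-submodule is invariant under the cyclic operator $y_1=C$, whose minimal polynomial $(X-\ga)^{pm}$ is a power of an irreducible, so the $y_1$-invariant subspaces already form a chain; the factor count then comes from Theorem \ref{rephn}. Your Nakayama-plus-induction argument is correct — the key verifications ($\partial_W(W^{pk}g)=W^{pk}\partial_W g$, so each $V_k=W^{pk}V$ is a submodule; multiplication by $W^p$ intertwines the actions and identifies $V_1$ with the $m-1$ model) all check out — but it is heavier than necessary for uniseriality alone; what it buys is the explicit determination of the full submodule lattice as the chain $\{W^{pk}V\}$ and a direct, computation-free-of-classification identification of every factor with $V_{\al,\be,\ga}$, where the paper instead invokes Theorem \ref{rephn}.
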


\begin{proof} (1) Consider the matrices $N,J\in\gl(p)$ defined as follows:
$$
N=\left(\begin{array}{cccc}
  0 & 0   & \dots & 0 \\
  1 & 0 & \dots & 0 \\
  \vdots & \ddots & \ddots & \vdots \\
  0 &  \dots & 1 & 0 \\
\end{array}\right),\, J=\left(\begin{array}{cccc}
  0 & 0   & \dots & 1 \\
  0 & 0 & \dots & 0 \\
  \vdots & \ddots & \ddots & \vdots \\
  0 &  \dots & 0 & 0 \\
\end{array}\right).
$$
We further let $K_i=k_i J\in\gl(p)$, $0\leq i\leq m-1$, where
$k_i\in F$. Then
$$
y_1\mapsto\left(%
\begin{array}{ccccc}
  N & 0 & \dots &\dots & K_0 \\
  J & N & 0 & \dots & K_1 \\
  \vdots & \ddots & \ddots & \ddots & \vdots \\
  0 & \dots & J & N & K_{m-2} \\
  0 & \dots & \dots & J & N+K_{m-1} \\
\end{array}%
\right),$$ for suitable $k_0,\dots,k_{m-1}\in F$. It is easy to
verify that
$$
[M,N]=\al\cdot I,
$$
and
$$
M J=0=JM,\; M K_i=0=K_i M.
$$
This shows that $R$ is a faithful representation, where the
minimal polynomials of $R(x_1)$ and $R(y_1)$ are as stated.

(2) Let $K$ be an algebraic closure of $F$. We have
$f(X^p)=g(X)^p$ for some $g\in K[X]$. Let $\ga_1,\dots,\ga_m$ be
the roots of $g(X)$ in $K[X]$ (these will be distinct if $F$ is
perfect, but not necessarily so in general).

Let $S:\h(1,K)\to\gl(pm,K)$ be the extension of $R$ to $\h(1,K)$.
Let $V$ and $W$ stand for the column spaces $F^{pm}$ and $K^{pm}$,
respectively.

Since $z$ and $x_1$ act on $W$ with single eigenvalues $\al$ and
$\be$, while $y_1$ acts with eigenvalues $\ga_1,\dots,\ga_m$ on
$W$, it follows from Theorem~\ref{rephn} that the
$\h(1,K)$-module~$W$ has $m$ composition factors, each of which is
isomorphic to some $V_{\al,\be,\ga_i}$.

Suppose, if possible, that $V$ is a reducible $\h(1,F)$-module. Then
there is a non-zero proper $\h(1,F)$-submodule $U$ of $V$. Then
$U\otimes K$ is an $\h(1,K)$-submodule of $V\otimes K\cong W$, so it has $k$
composition factors $V_{\al,\be,\ga_i}$ for some $1\leq k<m$.
Let $q(X)\in F[X]$ be the
characteristic polynomial of $y_1$ acting on $U$. Then $q(X)$ is the
characteristic polynomial of $y_1$ acting on $U\otimes K$, so
$$
q(X)=\underset{i\in I}\Pi(X-\ga_i)^p=\underset{i\in I}\Pi
(X^p-\ga_i^p),
$$
where $I$ is subset of $\{1,\dots,m\}$ of size $k$. In particular,
$q(X)=h(X^p)$, where $h\in K[X]$ has degree $k$. Since $q(X)\in
F[X]$, we infer $h(X)\in F[X]$. On the other hand, since $U$ is a
$y_1$-invariant subspace of $V$, it follows that $q(X)$ is a
factor of the characteristic polynomial of $y_1$ acting on $V$,
namely $f(X^p)$. Then $f(X^p)=h(X^p)a(X)$ for some $a(X)\in F[X]$.
This forces $a(X)=b(X^p)$, where $b(X)\in F[X]$. Thus,
$f(X^p)=h(X^p)b(X^p)$, which implies $f(X)=h(X)b(X)$. Since $h(X)$
has degree $k$, where $1\leq k<m$, the irreducibility of $f$ is
contradicted. This proves that $V$ is an irreducible
$\h(1,F)$-module.

(3) Suppose $f(X)=(X-\ga^p)^m$ for some $\ga\in F$. Then
$f(X^p)=(X-\ga)^{pm}$. This is a power of an irreducible
polynomial, so the column space $V=F^{pm}$ is uniserial as a
module under $y$ and hence as $\h(1)$-module. Since $\dim V=pm$,
$z$ acts via $\alpha\neq 0$ on $V$, $x_1$ has the only eigenvalue
$\be$ on $V$ and $y_1$ only $\ga$, it follows from
Theorem~\ref{rephn} that $V$ has $m$ composition factors, all
isomorphic to $V_{\al,\be,\ga}$.
\end{proof}

\begin{note}\label{imp}{\rm  It is clear that if $m>1$ then the faithful irreducible
$\hn$-module obtained in case (2) of Theorem \ref{cuando}
is inequivalent to any other previously discussed in the paper.

Suppose $F$ is perfect. Then $f(X^p)=g(X)^p$, where $g(X)\in F[X]$
is irreducible. Let $\ga_1,\dots,\ga_m$ be the distinct roots of
$g$ in $K$. Let $W(\ga_i)$ be the generalized eigenspace of $y_1$
acting on $W$. Since $y_1$ has minimal polynomial $g(X)^p$ acting
on~$W$, we see that
$$
W=W(\ga_1)\oplus\cdots\oplus W(\ga_m).
$$
Here each $W(\ga_i)$ is a faithful $\h(1,K)$-submodule of $W$,
where $x_1,y_1,z$ act with eigenvalues $\be$, $\ga_i,\al$,
respectively. Since $\dim(W)=pm$, it follows from
Theorem~\ref{rephn} (or Corollary \ref{co1}) that $W(\ga_i)\cong
V_{\al,\be,\ga_i}$. Thus, while $V$ is an irreducible $\h(1,F)$-module, its extension $W\cong V\otimes K$
is isomorphic, as $\h(1,K)$-module, to the direct sum of the $m$ non-isomorphic $\h(1,K)$-modules $V_{\al,\be,\ga_i}$, $1\leq i\leq m$.
}
\end{note}

\end{document}